\documentclass[10pt, a4paper, fleqn, final]{article}
\usepackage[utf8]{inputenc}
\usepackage[T1]{fontenc}
\usepackage[english]{babel}
\usepackage{amsmath, amsthm, amssymb, mathtools}
\usepackage{libertine}
\usepackage{graphicx}
\usepackage{hyperref, url, breakurl}

\newtheorem{theo}{Theorem}
\newtheorem{coro}[theo]{Corollary}
\newtheorem{lemm}[theo]{Lemma}

\theoremstyle{definition}
\newtheorem{defi}[theo]{Definition}

\allowdisplaybreaks[4]

\DeclareFontFamily{U}{MnSymbolF}{}
\DeclareSymbolFont{MnSyF}{U}{MnSymbolF}{m}{n}
\SetSymbolFont{MnSyF}{bold}{U}{MnSymbolF}{b}{n}
\DeclareFontShape{U}{MnSymbolF}{m}{n}{<-6> MnSymbolF5 <6-7> MnSymbolF6 <7-8> MnSymbolF7 <8-9> MnSymbolF8 <9-10> MnSymbolF9 <10-12> MnSymbolF10 <12-> MnSymbolF12}{}
\DeclareMathSymbol{\cdotbigcup}{\mathop}{MnSyF}{34}

\newcommand{\esg}[2]{#1\langle#2\rangle} 
\DeclarePairedDelimiter\abs{\lvert}{\rvert}

\title{From spanning forests to edge subsets}
\author{Martin Trinks\thanks{trinks@hs-mittweida.de, Hochschule Mittweida, University of Applied Sciences, Faculty Mathematics / Sciences / Computer Science, Technikumplatz 17, 09648 Mittweida, Germany}}
\date{\today}

\begin{document}

\maketitle

{\center\setlength{\parindent}{0pt}
\begin{minipage}[c]{5.7cm}
The author receives the grant 
080940498 from the European Social Fund (ESF) of the European Union (EU).
\end{minipage}
\hspace{0.4cm}
\begin{minipage}[c]{5.7cm}
\centering
\includegraphics[width=3cm]{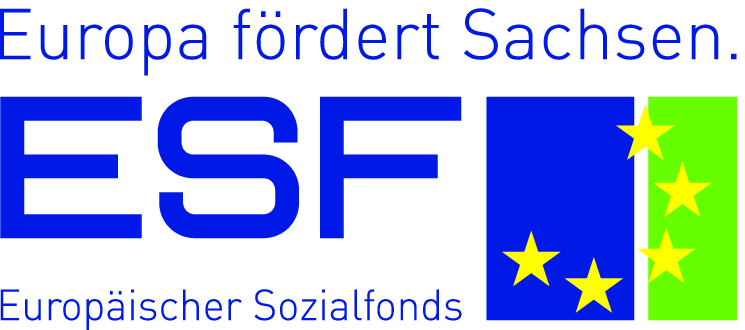}
\hspace{0.25cm}
\includegraphics[width=2cm]{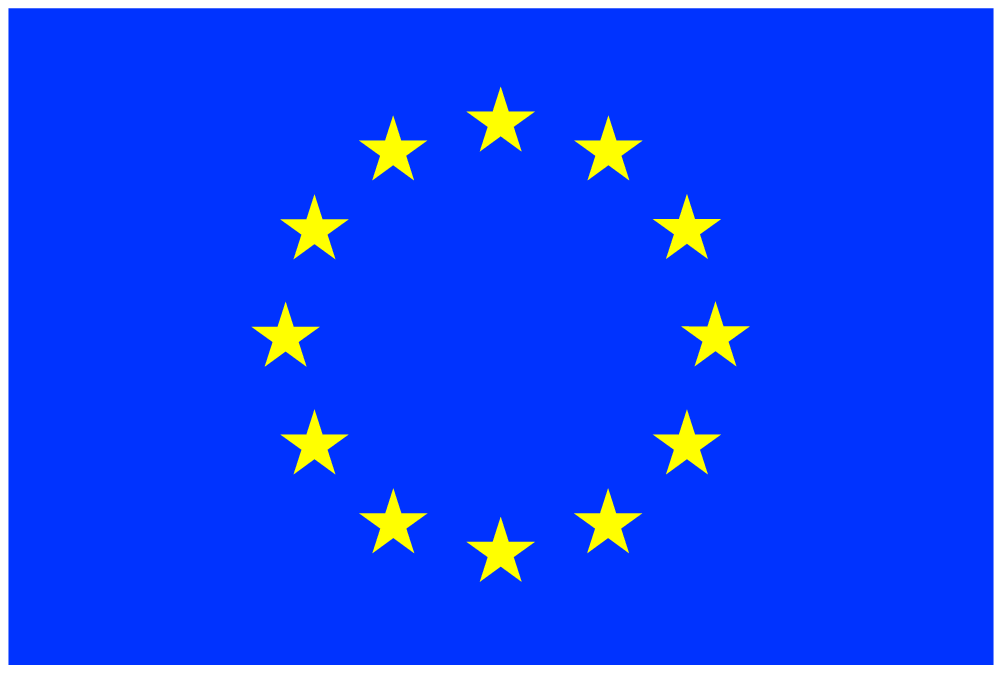}
\end{minipage}
}

\begin{abstract}
We give some insight into Tutte's definition of internally and externally active edges for spanning forests. Namely we prove, that every edge subset can be constructed from the edges of exactly one spanning forest by deleting a unique subset of the internally active edges and adding a unique subset of the externally active edges.
\end{abstract}

\section{Introduction}

The Tutte polynomial originally defined by a sum over spanning forests using (the number of) internally and externally active edges \cite{tutte1954}, can also be given as a sum over edge subsets \cite[Equation (9.6.2)]{tutte2001}. We show, how both representations, as sum over spanning forests and as sum over edge subsets, are directly connected to each other.

Namely we prove, that every edge subset can be constructed from the edges of exactly one spanning forest by deleting a unique subset of the internally active edges and adding a unique subset of the externally active edges. 

While seeking a generalization to matroids we observed that the statement is already given by Björner \cite[Proposition 7.3.6]{bjoerner1992}. It seems that this result is not well known in graph theory. Hence we state it explicitly in the special case of graphs and verify it graph-theoretically.

We apply this in some direct proofs for the equivalence of different representations of the Tutte polynomial, the chromatic polynomial, the reliability polynomial and the weighted graph polynomial. 

\begin{defi}
A \emph{graph} \(G = (V, E)\) is an ordered pair of a set \(V\), the vertex set, and a multiset \(E\), the edge set, such that the elements of the edge set are one- and two-element subsets of the vertex set, \(e \in \binom{V}{1} \cup \binom{V}{2}\) for all \(e \in E\).
\end{defi}

For a graph \(G = (V, E)\), we denote the number of connected components of \(G\) by \(k(G)\) and refer to \(G\) with the edge \(e \in E\) deleted and with the edge \(f \in \binom{V}{1} \cup \binom{V}{2}\) added by \(G_{-e}\) and \(G_{+f}\), respectively.

\begin{defi}
Let \(G = (V, E)\) be a graph and \(A \subseteq E\) an edge subset of \(G\). A graph \(\esg{G}{A} = (V, A)\) is a \emph{spanning subgraph} of \(G\). A tree \(T = (V, A)\) is a \emph{spanning tree} of \(G\). A forest \(F = (V, A)\) is a \emph{spanning forest} of \(G\), if \(k(G) = k(F)\). The \emph{set of spanning trees} and the \emph{set of spanning forests} of the graph \(G\) are denoted by \(\mathcal{T}(G)\) and \(\mathcal{F}(G)\), respectively.
\end{defi}

While the term ``spanning tree'' is unambiguously, the term ``spanning forest'' is not, because not every spanning subgraph, which is a forest, is a ``spanning forest''. A spanning forest is the union of spanning trees of each connected component.

In the following we consider graphs \(G = (V, E)\) with a linear order \(<\) on the edge set \(E\). This linear order can be represented by a bijection \(\beta \colon E \rightarrow \{1, \ldots, \abs{E}\}\) for all \(e, f \in E\) with
\begin{align}
e < f \Leftrightarrow \beta(e) < \beta(f).
\end{align}

\begin{defi}[Section 3 in \cite{tutte1954}]
Let \(G = (V, E)\) be a graph with a linear order \(<\) on the edge set \(E\) and \(F = (V, A) \in \mathcal{F}(G)\) a spanning forest of \(G\). An edge \(e \in A\) is \emph{internally active} in \(F\) with respect to \(G\) and \(<\), if there exists no edge \(f \in E \setminus A\), such that \(e < f\) and \(F_{-e+f} \in \mathcal{F}(G)\). We denote the \emph{set of internally active edges} and the \emph{number of internally active edges} of \(F\) with respect to \(G\) and \(<\) by \(E_i(F, G, <)\) and \(i(F, G, <)\), respectively.
\end{defi}

An edge \(e\) in the spanning forest \(F\) is internally active, if it is the maximal edge of all edges in the cut crossed by itself (connecting the vertices in the connected components arising by deleting \(e\) from \(F\)). In other words, the edge \(e\) can not be replaced by a greater edge (not in the spanning forest), such that \(F\) remains a spanning forest.

\begin{defi}[Section 3 in \cite{tutte1954}]
Let \(G = (V, E)\) be a graph with a linear order \(<\) on the edge set \(E\) and \(F = (V, A) \in \mathcal{F}(G)\) a spanning forest of \(G\). An edge \(f \in E \setminus A\) is \emph{externally active} in \(F\) with respect to \(G\) and \(<\), if there exists no edge \(e \in A\), such that \(f < e\) and \(F_{-e+f} \in \mathcal{F}(G)\). We denote the \emph{set of externally active edges} and the \emph{number of externally active edges} of \(F\) with respect to \(G\) and \(<\) by \(E_e(F, G, <)\) and \(e(F, G, <)\), respectively.
\end{defi}

An edge \(f\) not in the spanning forest is externally active, if it is the maximal edge of all edges in the cycle closed by itself. In other words, there is no greater edge (in the spanning forest), which can be replaced by \(f\), such that \(F\) remains a spanning forest.

\begin{defi}[Section 3 in \cite{tutte1954}]
\label{defi:Tutte_polynomial}
Let \(G = (V, E)\) be a graph with a linear order \(<\) on the edge set \(E\). The \emph{Tutte polynomial} is defined as 
\begin{align}
T(G, x, y) 
&= \sum_{F \in \mathcal{F}(G)}{x^{i(F, G, <)} y^{e(F, G, <)}}.
\end{align}
\end{defi}

The primal usage of ``a linear order on the edge set'' seems to be by Whitney \cite[Section 7]{whitney1932}. Internally and externally active edges were probably first defined by Tutte \cite[Section 3]{tutte1954} to state the Tutte polynomial. This war originally introduced under the name ``dichromate'' for connected graphs \cite[Equation (13)]{tutte1954} and extended to disconnected graphs by the multiplicativity with respect to components \cite[Equation (18)]{tutte1954}. It was shown, that the value of the polynomial is independent of the linear order on the edge set \cite[page 85-88]{tutte1954}. For some 
background to the definition of internally and externally active edges and the Tutte polynomial we refer to \cite{bari1979, kung2008, tutte2004}. For surveys on the Tutte polynomial and its applications we refer to \cite{brylawski1992, ellis2011, sokal2005}.
\section{Main theorem}

The spanning forests and their internally and externally active edges can be used to generate all edge subsets. We use the disjoint union \(\cdotbigcup\), the union of pairwise disjoint sets, in the statement of this main theorem below to indicate its bijectivity.

\begin{theo}
\label{theo:main_theorem}
Let \(G = (V, E)\) be a graph with a linear order \(<\) on the edge set \(E\). Then
\begin{align}
\cdotbigcup_{F = (V, A_f) \in \mathcal{F}(G)}{\cdotbigcup_{\substack{A_i \subseteq E_i(F, G, <) \\ A_e \subseteq E_e(F, G, <)}}{\{(A_f \setminus A_i) \cup A_e\}}} = \bigcup_{A \subseteq E}{\{A\}} = 2^{E}.
\end{align}
\end{theo}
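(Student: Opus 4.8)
The plan is to read the displayed identity as the assertion that the map
$\Phi\colon (F,A_i,A_e)\mapsto (A_f\setminus A_i)\cup A_e$, defined on all triples with $F=(V,A_f)\in\mathcal{F}(G)$, $A_i\subseteq E_i(F,G,<)$ and $A_e\subseteq E_e(F,G,<)$, is a bijection onto $2^E$. Since $A_i\subseteq A_f$ and $A_e\cap A_f=\emptyset$, from $B:=\Phi(F,A_i,A_e)$ one recovers $A_i=A_f\setminus B$ and $A_e=B\setminus A_f$; hence $\Phi$ is injective as soon as we know that $F$ itself is determined by $B$, and the whole statement reduces to the claim that for every $A\subseteq E$ there is exactly one spanning forest $F=(V,A_f)$ with $A_f\setminus A\subseteq E_i(F,G,<)$ and $A\setminus A_f\subseteq E_e(F,G,<)$. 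Throughout I would use the verbal reformulations given after the two activity definitions: $e\in A_f$ is internally active exactly when $e$ is the largest edge of the cut it induces in $F$ (its fundamental cut), and $f\notin A_f$ is externally active exactly when $f$ is the largest edge of the cycle it closes in $F$ (its fundamental cycle).

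For existence (surjectivity), given $A$ I would construct $F$ in two greedy phases. First, process the edges of $A$ in increasing order and keep each one that does not close a cycle; this yields a forest $F_1=(V,A_1)$ with $A_1\subseteq A$ spanning every connected component of $\esg{G}{A}$, and by the cycle property of such a smallest-first forest every $f\in A\setminus A_1$ is the largest edge of its fundamental cycle with respect to $F_1$. Second, process the edges of $E\setminus A$ in decreasing order and add each one joining two components, extending $F_1$ to a spanning forest $F=(V,A_f)$ of $G$; the largest-first rule makes each added edge the largest edge, among the edges of $E\setminus A$, of its fundamental cut. Since $A_1\subseteq A$ and $A_f\setminus A_1\subseteq E\setminus A$, one checks $A_f\setminus A=A_f\setminus A_1$ and $A\setminus A_f=A\setminus A_1$, so that $(A_f\setminus(A_f\setminus A))\cup(A\setminus A_f)=A_1\cup(A\setminus A_1)=A$.

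The step that needs care is that both activity conditions must hold with respect to the \emph{final} forest $F$, not merely to the intermediate data of each phase. For a dropped edge $f\in A\setminus A_1$ this is immediate, because its endpoints already lie in one component of $F_1$, so its fundamental cycle is unchanged by the Phase~2 additions and remains entirely inside $A$. For an added edge $e\in A_f\setminus A_1$ one must rule out a larger edge crossing its fundamental cut: a larger edge of $A$ would have joined the two sides already in Phase~1 (contradicting that $e$ joins them only in Phase~2), and a larger edge of $E\setminus A$ would have been selected in place of $e$ in Phase~2. This reconciliation of the two phases is the main obstacle, and I expect the bulk of the verification to sit here.

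For uniqueness (equivalently, disjointness of the intervals attached to distinct forests) I would argue by contradiction. Suppose $A$ satisfies the two inclusions for two distinct forests $F=(V,A_f)$ and $F'=(V,A'_f)$, and let $g$ be the smallest edge of the symmetric difference $A_f\triangle A'_f$, say $g\in A_f\setminus A'_f$. If $g\in A$, then $g\in A\setminus A'_f$ is externally active in $F'$, so its fundamental cycle with respect to $F'$ is $g$ together with $F'$-edges all smaller than $g$; this cycle must cross the fundamental cut of $g$ with respect to $F$, yielding an edge $h\in A'_f\setminus A_f$ with $h<g$. If $g\notin A$, then $g\in A_f\setminus A$ is internally active in $F$, so its fundamental cut with respect to $F$ is $g$ together with edges all smaller than $g$; the fundamental cycle of $g$ with respect to $F'$ must cross this cut, again yielding an edge $h\in A'_f\setminus A_f$ with $h<g$. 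In both cases $h$ contradicts the minimality of $g$, and since the two defining conditions are symmetric in $F$ and $F'$ the case $g\in A'_f\setminus A_f$ is identical; hence $F=F'$, which together with the constructed $F$ finishes the bijection.
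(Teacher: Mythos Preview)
Your proof is correct, and the surjectivity half is essentially identical to the paper's: both build the forest by first greedily inserting the edges of $A$ in increasing order and then the edges of $E\setminus A$ in decreasing order, and both then verify the activity conditions for the resulting $F$. You are in fact more careful than the paper on this point, since you explicitly flag and check that no edge of $A$ can lie in the fundamental cut (with respect to the final $F$) of a Phase~2 edge $e$; the paper only asserts that $e$ is the greatest edge of $E\setminus A$ in that cut, which by itself does not yet give internal activity.

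Your injectivity argument, however, is genuinely different. The paper picks an arbitrary $g\in A^1_f\setminus A^2_f$, invokes the symmetric exchange property of spanning forests to obtain a single $h\in A^2_f\setminus A^1_f$ for which both $(V,A^1_f)_{-g+h}$ and $(V,A^2_f)_{-h+g}$ are spanning, and then runs a four-way case analysis on the membership of $g$ and $h$ in $A$. You instead take the \emph{smallest} edge $g$ of $A_f\triangle A'_f$ and use only the elementary fact that the fundamental cycle of $g$ in one forest must meet the fundamental cut of $g$ in the other; this yields just two cases and produces a strictly smaller edge of the symmetric difference, avoiding any appeal to symmetric exchange. Your route is a bit more economical; the paper's route has the virtue of treating the two forests completely symmetrically within each case.
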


\begin{proof}
We prove that the function \(m \colon \{(A_f, A_i, A_e) \mid F = (V, A_f) \in \mathcal{F}(G), A_i \subseteq E_i(F, G, <), A_e \subseteq E_e(F, G, <)\} \rightarrow 2^{E}\) with \(m((A_f, A_i, A_e)) = (A_f \setminus A_i) \cup A_e\) is a bijection.

First, we show that the function  \(m\) is injective by an indirect proof. Assume it is not, that means there are two different triples \(A^1 = (A^1_f, A^1_i, A^1_e)\) and \(A^2= (A^2_f, A^2_i, A^2_e)\), such that \(m(A^1) = m(A^2) = A\). 

If \(A^1 \neq A^2\), then \(A^1_f \neq A^2_f\), because otherwise \(A^1_i = A^1_f \setminus A =  A^2_f \setminus A = A^2_i\) and \(A^1_e = A \setminus A^1_f = A \setminus A^2_f = A^2_e\) and the triples would not be different.

As \(A^1_f\) and \(A^2_f\) are the edges of different spanning forests, there is an edge \(g \in A^1_f \setminus A^2_f\). Furthermore, for any choice of \(g\), there is an edge \(h \in A^2_f \setminus A^1_f\), such that \((V, A^1_f)_{-g+h}, (V, A^2_f)_{-h+g} \in \mathcal{F}(G)\). (There is at least one edge in the path connecting the incident vertices of \(g\) in \((V, A^2_f)\), which is in the cut crossed by \(g\) in \((V, A^1_f)\). These conditions ensure that we can ``compare'' the edge \(g\) and \(h\), because \(g\) is in the cycle closed by adding \(h\) to \(A^1_f\) and, equivalently, in the cut crossed by \(h\) in \(A^2_f\), and vice versa.)

We distinguish whether \(g\) (\(g \in A^1_f\) but \(g \notin A^2_f\)) and \(h\) (\(h \notin A^1_f\) but \(h \in A^2_f\)) are in \(A\) or not:
\begin{itemize}
\item Case 1: \(g \in A, h \in A\): We have a contradiction by
\begin{itemize}
\item \(g \in A \Rightarrow g \in A^2_e \Rightarrow h < g,\)
\item \(h \in A \Rightarrow h \in A^1_e \Rightarrow g < h.\)
\end{itemize}
\item Case 2: \(e \in A, h \notin A\): We have a contradiction by
\begin{itemize}
\item \(g \in A \Rightarrow g \in A^2_e \Rightarrow h < g,\)
\item \(h \notin A \Rightarrow h \in A^2_i \Rightarrow g < h.\)
\end{itemize}
\item Case 3: \(g \notin A, h \in A\): We have a contradiction by
\begin{itemize}
\item \(g \notin A \Rightarrow g \in A^1_i \Rightarrow g < h,\)
\item \(h \in A \Rightarrow h \in A^2_e \Rightarrow h < g.\)
\end{itemize}
\item Case 4: \(g \notin A, h \notin A\): We have a contradiction by
\begin{itemize}
\item \(g \notin A \Rightarrow g \in A^1_i \Rightarrow h < g,\)
\item \(h \notin A \Rightarrow h \in A^2_i \Rightarrow g < h.\)
\end{itemize}
\end{itemize}

Consequently there are no such triples \(A^1\) and \(A^2\), hence the function \(m\) is injective.

Second, we show that the function \(m\) is surjective by proving that for each edge set \(A \subseteq E\) there is a spanning forest \(F \in \mathcal{F}(G)\) and a triple \((A_f, A_i, A_e)\) with \(F = (V, A_f), A_i \subseteq E_i(F, G, <), A_e \subseteq E_e(F, G, <)\) such that \(m((A_f, A_i, A_e)) = A\). 

We arrange the edges of \(A\) and \(E \setminus A\) in a sequence \(e_1, \ldots, e_{\abs{E}}\), such that the edges of \(A\) appear before the edges of \(E \setminus A\), that the edges of \(A\) are increasing, and that the edges of \(E \setminus A\) are decreasing, both with respect to \(<\).

We start with the edgeless graph on the vertex set \(V\) and successively add the edges of \(E\) as they appear in the sequence in this graph, if the graph remains cycle-free. That means \(G^0 = (V, \emptyset)\) and for \(i \in \{1, \ldots, \abs{E}\}\) we have
\begin{align*}
G^{i} =
\begin{cases}
G^{i-1}_{+e_i} & \text{if } G^{i-1}_{+e_i} \text{ is a forest}, \\
G^{i-1} & \text{if } G^{i-1}_{+e_i} \text{ is not a forest}.
\end{cases}
\end{align*}
Thus, \(G^{\abs{E}} = F = (V, A_f) \in \mathcal{F}(G)\) is a spanning forest of \(G\). 

An edges, which is in \(A\) but not in \(A_f\), is not added to \(G^i\), meaning that it would close a cycle consisting of earlier added and thus lesser edges of \(A\), hence this edge is an externally active edges (maximal edge of the cycles closed by itself), \(A \setminus A_f = A_e \subseteq E_e(F, G, <)\).

An edge, which is not in \(A\) but in \(A_f\), is added to \(G^i\), meaning that it is the first and thus greatest edges of \(E \setminus A\) crossing the according cut and hence this edge is an internally active edges (maximal edge of the cut crossed by itself), \(A_f \setminus A = A_i \in E_i(F, G, <)\).

Consequently, for each edge subset \(A \subseteq E\) there is a spanning forest \(F \in \mathcal{F}(G)\) and an according triple \((A_f, A_i, A_e)\) with \((A_f \setminus A_i) \cup A_e = A\), hence the function \(m\) is surjective.
\end{proof}

\begin{coro}
\label{coro:main_theorem}
Let \(G = (V, E)\) be a graph with a linear order \(<\) on the edge set \(E\), \(A \subseteq E\) an edge subset of \(G\) and \(f(G, A)\) a function mapping in a commutative semigroup. Then
\begin{align}
\sum_{F =(V, A_f) \in \mathcal{F}(G)}{\sum_{\substack{A= (A_f \setminus A_i) \cup A_e \\ A_i \subseteq E_i(F, G, <) \\ A_e \subseteq E_e(F, G, <)}}{f(G, A)}} 
= \sum_{A \subseteq E}{f(G, A)}.
\end{align}
\end{coro}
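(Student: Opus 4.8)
The plan is to deduce this directly from Theorem \ref{theo:main_theorem}, since the corollary is nothing more than the summation form of the bijection \(m\). First I would observe that the inner summation on the left-hand side ranges exactly over the admissible triples \((A_f, A_i, A_e)\) with \(F = (V, A_f) \in \mathcal{F}(G)\), \(A_i \subseteq E_i(F, G, <)\) and \(A_e \subseteq E_e(F, G, <)\): the constraint \(A = (A_f \setminus A_i) \cup A_e\) appearing under the inner sum is precisely the assertion \(A = m((A_f, A_i, A_e))\). Hence the entire double sum on the left equals \(\sum f(G, m((A_f, A_i, A_e)))\), taken over all triples in the domain of \(m\).

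Next I would invoke Theorem \ref{theo:main_theorem}, which states that \(m\) is a bijection onto \(2^{E}\). Consequently the assignment \((A_f, A_i, A_e) \mapsto (A_f \setminus A_i) \cup A_e\) meets every edge subset \(A \subseteq E\) exactly once. Reindexing the double sum by \(A = m((A_f, A_i, A_e))\) therefore transforms it into \(\sum_{A \subseteq E} f(G, A)\), in which each \(A\) contributes a single summand; this is exactly the right-hand side. No combinatorial work remains, since all of it is already absorbed into the bijectivity of \(m\).

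The only point deserving attention is that \(f\) is valued in a commutative semigroup rather than in a ring. I would note that associativity justifies collapsing the double sum into a single sum over triples, commutativity justifies the reindexing by the bijection \(m\), and the lack of an identity element causes no difficulty because every sum occurring here is over a finite and \emph{nonempty} index set: there is always at least one spanning forest, and for each one the choice \(A_i = A_e = \emptyset\) is admissible, so no empty sums ever arise. I expect no genuine obstacle; the corollary is a purely formal consequence of the fact that \(m\) enumerates each edge subset precisely once.
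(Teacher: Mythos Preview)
Your proposal is correct and takes exactly the same approach as the paper, which simply says that the equation follows directly from Theorem~\ref{theo:main_theorem}. Your write-up merely spells out the reindexing argument (and the harmless semigroup technicality) that the paper leaves implicit.
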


\begin{proof}
The equation follows directly from Theorem \ref{theo:main_theorem}.
\end{proof}

\begin{coro}[Theorem 3 in \cite{ellis2011}]
Let \(G = (V, E)\) be a graph with a linear order \(<\) on the edge set \(E\). Then
\begin{align}
\sum_{F \in \mathcal{F}(G)}{2^{i(F, G, <) + e(F, G, <)}} = 2^{\abs{E}}.
\end{align}
\end{coro}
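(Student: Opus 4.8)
The plan is to derive this as an immediate consequence of the main theorem by counting cardinalities, or equivalently by specializing Corollary~\ref{coro:main_theorem} to the constant function. First I would fix a spanning forest \(F = (V, A_f) \in \mathcal{F}(G)\) and count the admissible pairs \((A_i, A_e)\) with \(A_i \subseteq E_i(F, G, <)\) and \(A_e \subseteq E_e(F, G, <)\): since \(A_i\) and \(A_e\) range independently over all subsets of two finite sets of sizes \(i(F, G, <)\) and \(e(F, G, <)\), there are exactly \(2^{i(F, G, <)} \cdot 2^{e(F, G, <)} = 2^{i(F, G, <) + e(F, G, <)}\) of them.

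Next I would invoke Theorem~\ref{theo:main_theorem}, which asserts that the sets \((A_f \setminus A_i) \cup A_e\), indexed by all spanning forests \(F\) together with all admissible pairs \((A_i, A_e)\), form a disjoint union equal to \(2^E\). Taking cardinalities of both sides turns the disjoint union on the left into a plain sum of cardinalities in which each singleton contributes \(1\), so the left-hand side evaluates to \(\sum_{F \in \mathcal{F}(G)} 2^{i(F, G, <) + e(F, G, <)}\) by the count from the previous step, while the right-hand side has cardinality \(\abs{2^E} = 2^{\abs{E}}\). Equivalently, I would apply Corollary~\ref{coro:main_theorem} with \(f(G, A) = 1\) in the commutative semigroup \((\mathbb{N}, +)\), which collapses the inner sum to the number of admissible pairs and the outer right-hand sum to \(\sum_{A \subseteq E} 1 = 2^{\abs{E}}\).

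Since the entire combinatorial content is already carried by Theorem~\ref{theo:main_theorem}, there is essentially no obstacle here; the only point deserving a moment's care is the bijectivity encoded in the disjoint union, which guarantees that no edge subset is counted twice and that the inner index sets are genuinely disjoint, so that the cardinalities add without overcounting.
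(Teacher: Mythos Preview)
Your proposal is correct and matches the paper's own proof exactly: the paper simply notes that the equation follows directly from Corollary~\ref{coro:main_theorem} with \(f(G, A) = 1\). Your additional explanation of the cardinality count is a faithful unpacking of that one-line argument.
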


\begin{proof}
The equation follows directly from Corollary \ref{coro:main_theorem} with \(f(G, A) = 1\).
\end{proof}

To apply Theorem \ref{theo:main_theorem}, the following lemma stating some kind of independence of the internally and externally active edges of a given spanning forest seems useful: Deleting an internally active edge splits a connected component, which can not be reconnected by adding externally active edges. Adding  an externally active edge connects vertices already connected by a path, which can not be destroyed by deleting internally active edges.

\begin{lemm}
\label{lemm:independence}
Let \(G = (V, E)\) be a graph with a linear order \(<\) on the edge set \(E\) and \(F \in \mathcal{F}(G)\) a spanning forest of \(G\). For all \(e \in E_i(F, G, <)\) and \(f \in E_e(F, G, <)\) it holds
\begin{align}
k(F) = k(F_{+f}) > k(F_{-e+f}) = k(F_{-e}) = k(F)-1.
\end{align}
\end{lemm}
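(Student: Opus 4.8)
The plan is to verify the displayed chain one link at a time, getting the two outer equalities from the defining property of a spanning forest, the central equality from the activity hypotheses on $e$ and $f$, and the strict comparison by combining these. Write $F = (V, A)$. First I would dispose of the links that do not use activity at all. Since $F$ is a spanning forest we have $k(F) = k(G)$, so $F$ and $G$ induce the same partition of $V$ into components; hence the two endpoints of any edge $g \in E$ already lie in a common component of $F$. Applied to $f \in E \setminus A$, adding $f$ merely closes a cycle and leaves the component count unchanged, giving $k(F_{+f}) = k(F)$. Dually, because $F$ is a forest every edge of $A$ is a bridge, so removing $e \in A$ separates exactly one component of $F$ into two and changes the number of components by one; this is the equality relating $k(F_{-e})$ and $k(F)$.

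The heart of the argument is the central equality $k(F_{-e+f}) = k(F_{-e})$, and here I expect the real work. I would reformulate it as the claim that adding $f$ to $F_{-e}$ does \emph{not} reconnect the two pieces created by deleting $e$, equivalently that $F_{-e+f} \notin \mathcal{F}(G)$. The underlying observation is the cut/cycle duality already used informally after the definitions: for the forest $F$, the swap $F_{-e+f}$ is again a spanning forest precisely when $f$ crosses the cut induced by $e$, i.e. when $e$ lies on the unique $F$-path joining the endpoints of $f$. So it suffices to rule out $F_{-e+f} \in \mathcal{F}(G)$, and this I would do by contradiction. Suppose $F_{-e+f} \in \mathcal{F}(G)$. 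Since $f \in E \setminus A$ and $e$ is internally active, the definition of internal activity forbids $e < f$, forcing $f < e$; since $e \in A$ and $f$ is externally active, the definition of external activity forbids $f < e$, forcing $e < f$. These are incompatible, so no such spanning forest exists. Consequently $f$ does not cross the cut of $e$, the endpoints of $f$ remain in one component of $F_{-e}$, and adding $f$ again only closes a cycle, establishing $k(F_{-e+f}) = k(F_{-e})$. Chaining this with $k(F_{+f}) = k(F)$ and the bridge computation for $k(F_{-e})$ then yields the strict comparison in the middle of the displayed formula.

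The only genuine obstacle is the equivalence ``$F_{-e+f}$ is a spanning forest $\iff$ $f$ reconnects the two components split off by $e$''; this is exactly the cut/cycle duality underlying the notions of internal and external activity, and once it is in hand the two activity conditions collide immediately to give the contradiction. Everything else reduces to the elementary facts that adding an edge within a component closes a cycle while deleting a bridge separates a single component into two, which are precisely the two activity-free links handled at the outset.
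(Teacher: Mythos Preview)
Your proposal is correct and follows essentially the same approach as the paper: the outer equalities come straight from the spanning-forest/bridge facts, and the central equality $k(F_{-e+f}) = k(F_{-e})$ is obtained by observing that if $f$ did reconnect the two pieces split by $e$ then $F_{-e+f}$ would be a spanning forest, which forces both $f<e$ (from internal activity of $e$) and $e<f$ (from external activity of $f$), a contradiction. The paper states this in one sentence by pointing back to the case distinction in the proof of Theorem~\ref{theo:main_theorem}; you simply spell the same argument out in full.
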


\begin{proof}
The first part, \(k(F) = k(F_{+f}) > k(F_{-e}) = k(F)-1\), follows directly from the definition of a spanning forest. 
The idea to prove the rest, \(k(F_{-e+f}) = k(F_{-e})\) is already used in the case distinction in the proof of Theorem \ref{theo:main_theorem}: The edge \(f\) can not reconnect the connected components arising from the deletion of \(e\), because otherwise each of the two edges must be greater than the other.
\end{proof}
\section{Applications of the main theorem}

As an application of Theorem \ref{theo:main_theorem} we prove the equivalence of representations using sums over spanning forests/trees (spanning forest/tree representation) and sums over edge subsets (edge subset representation) for the Tutte polynomial, the chromatic polynomial, the reliability polynomial and (a derivation of) the weighted graph polynomial.

\subsection{Edge subset representation of the Tutte polynomial}

The edge subset representation of the Tutte polynomial was first given by Tutte stating the relation to the dichromatic polynomial \cite[Equation (21)]{tutte1967}. In this article, the dichromatic polynomial is defined by an edge subset representation and it is shown, that it satisfies recurrence relations \cite[Equation (18) - (20)]{tutte1967} analogous to the recurrence relations satisfied by the Tutte polynomial \cite[Equation (18) - (20)]{tutte1954}.

\begin{theo}[Equation (9.6.2) in \cite{tutte2001}]
Let \(G = (V, E)\) be a graph with a linear order \(<\) on the edge set \(E\). The Tutte polynomial has the edge subset representation
\begin{align}
T(G, x, y)
&= \sum_{A \subseteq E}{(x-1)^{k(\esg{G}{A}) - k(G)} (y-1)^{\abs{A} - \abs{V} + k(\esg{G}{A})}}.
\end{align}
\end{theo}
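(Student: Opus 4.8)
The plan is to derive the edge subset representation directly from the defining spanning-forest representation of Definition \ref{defi:Tutte_polynomial} by applying Corollary \ref{coro:main_theorem}. I would invoke that corollary with
\[
f(G, A) = (x-1)^{k(\esg{G}{A}) - k(G)} (y-1)^{\abs{A} - \abs{V} + k(\esg{G}{A})},
\]
so that the claimed right-hand side equals
\[
\sum_{F = (V, A_f) \in \mathcal{F}(G)} \sum_{\substack{A_i \subseteq E_i(F, G, <) \\ A_e \subseteq E_e(F, G, <)}} f\bigl(G, (A_f \setminus A_i) \cup A_e\bigr).
\]
Everything then reduces to evaluating the summand for \(A = (A_f \setminus A_i) \cup A_e\) in terms of the cardinalities \(\abs{A_i}\) and \(\abs{A_e}\) alone.

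The central step is the component-count identity
\[
k(\esg{G}{A}) = k(G) + \abs{A_i}.
\]
Since every edge of the forest \(F\) is a bridge, deleting the \(\abs{A_i}\) edges of \(A_i\) from \(F\) raises the number of components by exactly \(\abs{A_i}\). It then remains to show that adding the externally active edges of \(A_e\) does not lower the count again. This is the subset version of Lemma \ref{lemm:independence}: each \(f \in A_e\) is the maximal edge of the fundamental cycle it closes in \(F\), and no internally active edge can lie on that cycle, since such an edge \(e\) would be forced to satisfy both \(e < f\) (by the external activity of \(f\)) and \(f < e\) (by the internal activity of \(e\)). Hence the path joining the endpoints of \(f\) in \(F\) uses no edge of \(A_i \subseteq E_i(F, G, <)\), survives the deletion of \(A_i\), and keeps the two endpoints connected; adding \(f\) therefore leaves the component count unchanged, independently of the order of additions.

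Using \(\abs{A_f} = \abs{V} - k(G)\), the identity above gives \(k(\esg{G}{A}) - k(G) = \abs{A_i}\), and substituting \(\abs{A} = \abs{V} - k(G) - \abs{A_i} + \abs{A_e}\) yields \(\abs{A} - \abs{V} + k(\esg{G}{A}) = \abs{A_e}\). Thus \(f(G, A) = (x-1)^{\abs{A_i}} (y-1)^{\abs{A_e}}\), so for a fixed spanning forest the inner double sum factors and the binomial theorem delivers
\[
\sum_{A_i \subseteq E_i(F, G, <)} (x-1)^{\abs{A_i}} = x^{i(F, G, <)}, \qquad \sum_{A_e \subseteq E_e(F, G, <)} (y-1)^{\abs{A_e}} = y^{e(F, G, <)}.
\]
Summing the product \(x^{i(F, G, <)} y^{e(F, G, <)}\) over all \(F \in \mathcal{F}(G)\) reproduces Definition \ref{defi:Tutte_polynomial} exactly.

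The main obstacle I anticipate is precisely the component-count identity, and within it the claim that the externally active edges cannot reconnect the components split off by the internally active ones. Lemma \ref{lemm:independence} supplies this independence for a single pair \((e, f)\); the real work is lifting it to arbitrary subsets \(A_i\) and \(A_e\), which I would settle once and for all through the fundamental-cycle argument above, observing that each connecting path lies entirely inside \(F_{-e}\) after removing all of \(A_i\) and hence persists in every intermediate graph. The remaining exponent bookkeeping and the binomial evaluation are routine.
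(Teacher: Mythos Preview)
Your proposal is correct and follows essentially the same approach as the paper: both proofs combine Corollary~\ref{coro:main_theorem}, the independence statement of Lemma~\ref{lemm:independence}, and the binomial theorem. The only cosmetic difference is direction---the paper starts from Definition~\ref{defi:Tutte_polynomial}, binomially expands, rewrites the exponents via Lemma~\ref{lemm:independence}, and then applies Corollary~\ref{coro:main_theorem}, whereas you run the same chain in reverse---and your explicit fundamental-cycle argument for the subset version of Lemma~\ref{lemm:independence} is in fact a bit more careful than the paper's one-line invocation.
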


\begin{proof}
First, we expand the definition of the Tutte polynomial (Definition \ref{defi:Tutte_polynomial}) using the binomial theorem:
\begin{align*}
T(G, x, y)
&= \sum_{F \in \mathcal{F}(G)}{x^{i(F, G, <)} y^{e(F, G, <)}} \\
&= \sum_{F \in \mathcal{F}(G)}{(x-1+1)^{\abs{E_i(F, G, <)}} (y-1+1)^{\abs{E_e(F, G, <)}}} \\
&= \sum_{F \in \mathcal{F}(G)}{\sum_{\substack{A_i \subseteq E_i(F, G, <) \\ A_e \subseteq E_e(F, G, <)}}{(x-1)^{\abs{A_i}} (y-1)^{\abs{A_e}}}}.
\end{align*}

Second, we represent for each spanning forest \(F\) the number of internally and externally active edges in terms of the graph \(G\) and the spanning subgraph \(\esg{G}{A} = (V, A)\) with \(A = (A_f \setminus A_i) \cup A_e\) using Lemma \ref{lemm:independence}: If \(\esg{G}{A}\) has more connected components than the graph \(G\), each ``additional'' connected component results from deleting an internally active edge, i.e., \(\abs{A_i} = k(\esg{G}{A}) - k(G)\). If \(\esg{G}{A}\) is not a forest, each ``additional'' edge (closing a cycle) results from adding an externally active edge, i.e., \(\abs{A_e} = \abs{A} - \abs{V} + k(\esg{G}{A})\). Thus we have
\begin{align*}
T(G, x, y)
&= \sum_{F = (V, A_f) \in \mathcal{F}(G)}{\sum_{\substack{A = (A_f \setminus A_i) \cup A_e \\ A_i \subseteq E_i(F, G, <) \\ A_e \subseteq E_e(F, G, <)}}{(x-1)^{\abs{A_i}} (y-1)^{\abs{A_e}}}} \\
&= \sum_{F = (V, A_f) \in \mathcal{F}(G)}{\sum_{\substack{A = (A_f \setminus A_i) \cup A_e \\ A_i \subseteq E_i(F, G, <) \\ A_e \subseteq E_e(F, G, <)}}{(x-1)^{k(\esg{G}{A}) - k(G)} (y-1)^{\abs{A} - \abs{V} + k(\esg{G}{A})}}}.
\end{align*}

Finally, the statement follows by Corollary \ref{coro:main_theorem}:
\begin{align*}
T(G, x, y)
&= \sum_{A \subseteq E}{(x-1)^{k(\esg{G}{A}) - k(G)} (y-1)^{\abs{A} - \abs{V} + k(\esg{G}{A})}}. \qedhere
\end{align*}
\end{proof}

\subsection{Spanning forest representation of the chromatic polynomial}

\begin{defi}[\cite{birkhoff1912}]
Let \(G = (V, E)\) be a graph. The \emph{chromatic polynomial} \(\chi(G, x)\) is the number of proper (vertex) colorings of \(G\) with at most \(x\) colors.
\end{defi}

The spanning forest representation of the chromatic polynomial can be easily derived from its relation to the Tutte polynomial, which follows from the recurrence relations both polynomials satisfy. But the direct proof points out more clearly why internally and externally active edges make different contributions to the chromatic polynomial.

\begin{theo}[Theorem 14.1 in \cite{biggs1993}]
Let \(G = (V, E)\) be a graph with a linear order \(<\) on the edge set \(E\). The chromatic polynomial has the spanning forest representation
\begin{align}
\chi(G, x)
&= (-1)^{\abs{V}} (-x)^{k(G)} \sum_{\substack{F \in \mathcal{F}(G) \\ e(F, G, <) = 0}}{(1 - x)^{i(F, G, <)}}.
\end{align}
\end{theo}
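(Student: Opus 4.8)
The plan is to give a \emph{direct} proof, starting from the classical subgraph (edge-subset) expansion of the chromatic polynomial and then transporting it to spanning forests via Corollary~\ref{coro:main_theorem}. First I would recall the inclusion--exclusion expansion
\[
\chi(G, x) = \sum_{A \subseteq E}{(-1)^{\abs{A}} x^{k(\esg{G}{A})}},
\]
which follows from counting the proper colorings as those colorings that avoid a monochromatic edge and sieving over the sets of edges forced to be monochromatic. This already puts \(\chi(G,x)\) in exactly the edge-subset shape to which Corollary~\ref{coro:main_theorem} applies, with \(f(G, A) = (-1)^{\abs{A}} x^{k(\esg{G}{A})}\).

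Next I would rewrite the two exponents in terms of the triple \((A_f, A_i, A_e)\) with \(A = (A_f \setminus A_i) \cup A_e\), reusing the two identities already established in the proof of the edge-subset representation of the Tutte polynomial (themselves consequences of Lemma~\ref{lemm:independence}): the deleted internally active edges account precisely for the extra components, \(\abs{A_i} = k(\esg{G}{A}) - k(G)\), and the added externally active edges account precisely for the extra independent cycles, \(\abs{A_e} = \abs{A} - \abs{V} + k(\esg{G}{A})\). Together with \(\abs{A_f} = \abs{V} - k(G)\) these give \(k(\esg{G}{A}) = k(G) + \abs{A_i}\) and \(\abs{A} = \abs{V} - k(G) - \abs{A_i} + \abs{A_e}\), so that the summand factors cleanly as
\[
(-1)^{\abs{A}} x^{k(\esg{G}{A})} = (-1)^{\abs{V} - k(G)} x^{k(G)} (-x)^{\abs{A_i}} (-1)^{\abs{A_e}}.
\]

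Then I would carry out the two independent binomial sums supplied by Corollary~\ref{coro:main_theorem}. Summing \((-1)^{\abs{A_e}}\) over \(A_e \subseteq E_e(F, G, <)\) yields \((1 + (-1))^{e(F, G, <)} = 0^{e(F, G, <)}\), which vanishes unless \(e(F, G, <) = 0\); this is exactly what collapses the outer sum onto the spanning forests with no externally active edges, and it is the conceptual point the author wants to highlight, namely that externally active edges contribute to the chromatic polynomial in a qualitatively different way from internally active ones. Summing \((-x)^{\abs{A_i}}\) over \(A_i \subseteq E_i(F, G, <)\) yields \((1 - x)^{i(F, G, <)}\). Finally I would absorb the prefactor via \((-1)^{\abs{V} - k(G)} x^{k(G)} = (-1)^{\abs{V}} (-x)^{k(G)}\) to recover the claimed formula.

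The main obstacle, if one insists on a self-contained argument rather than quoting the preceding Tutte computation, is the component count \(k(\esg{G}{A}) = k(G) + \abs{A_i}\): it requires that none of the externally active edges in \(A_e\) reconnect the components split off by deleting the internally active edges in \(A_i\). This is precisely Lemma~\ref{lemm:independence}, extended from a single pair \(e, f\) to the full sets \(A_i, A_e\) --- concretely, for each externally active \(f\) the lemma forces the fundamental path of \(f\) in \(F\) to avoid every internally active edge, so that path survives in \((V, A_f \setminus A_i)\) and keeps the endpoints of \(f\) connected. Everything after this reduction is routine binomial bookkeeping.
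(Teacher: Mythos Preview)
Your proposal is correct and follows essentially the same approach as the paper: start from Whitney's edge-subset expansion, transport it to spanning forests via Corollary~\ref{coro:main_theorem}, use Lemma~\ref{lemm:independence} to rewrite the summand so that the dependences on \(A_i\) and \(A_e\) decouple, and then perform the two binomial sums. The only difference is organizational---the paper treats the externally active and internally active contributions in two separate passes rather than factoring the summand all at once---so your write-up is a slightly more compressed version of the same argument.
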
 

\begin{proof}
We start with the representation of the chromatic polynomial as sum over edge subsets \cite[Section 2]{whitney1931} and apply Corollary \ref{coro:main_theorem}:
\begin{align*}
\chi(G, x)
&= \sum_{A \subseteq E}{x^{k(\esg{G}{A})} (-1)^{\abs{A}}} \\
&= \sum_{F = (V, A_f) \in \mathcal{F}(G)}{\sum_{\substack{A = (A_f \setminus A_i) \cup A_e \\ A_i \subseteq E_i(F, G, <) \\ A_e \subseteq E_e(F, G, <)}}{x^{k(\esg{G}{A})} (-1)^{\abs{A}}}}.
\end{align*}

First, we analyze the contribution of the externally active edges \(A_e \subseteq E_e(F, G, <)\) to the term \(x^{k(\esg{G}{A})} (-1)^{\abs{A}}\): Each externally active edge \(f \in E_e(F, G, <)\) contributes (independently) the factor \(-1\) if \(f \in A_e\) (the number of connected components is not influenced), and the factor \(1\) otherwise:
\begin{align*}
\chi(G, x)
&= \sum_{F =(V, A_f) \in \mathcal{F}(G)}{\sum_{\substack{A' = A_f \setminus A_i \\ A_i \subseteq E_i(F, G, <) \\ A_e \subseteq E_e(F, G, <)}}{x^{k(\esg{G}{A' \cup A_e})} (-1)^{\abs{A' \cup A_e}}}} \\
&= \sum_{F =(V, A_f) \in \mathcal{F}(G)}{\sum_{\substack{A' = A_f \setminus A_i \\ A_i \subseteq E_i(F, G, <) \\ A_e \subseteq E_e(F, G, <)}}{x^{k(\esg{G}{A'})} (-1)^{\abs{A'}} (-1)^{\abs{A_e}}}}
\\
&= \sum_{F = (V, A_f) \in \mathcal{F}(G)}{\sum_{\substack{A' = A_f \setminus A_i \\ A_i \subseteq E_i(F, G, <)}}{x^{k(\esg{G}{A'})} (-1)^{\abs{A'}} \sum_{A_e \subseteq E_e(F, G, <)}{(-1)^{\abs{A_e}}}}}
\\
&= \sum_{F = (V, A_f) \in \mathcal{F}(G)}{\sum_{\substack{A' = A_f \setminus A_i \\ A_i \subseteq E_i(F, G, <)}}{x^{k(\esg{G}{A'})} (-1)^{\abs{A'}} (1-1)^{e(F, G, <)}}} \\
&= \sum_{\substack{F = (V, A_f) \in \mathcal{F}(G) \\ e(F, G, <) = 0}}{\sum_{\substack{A' = A_f \setminus A_i \\ A_i \subseteq E_i(F, G, <)}}{x^{k(\esg{G}{A'})} (-1)^{\abs{A'}}}}.
\end{align*}

Second, we analyze the contribution of the internally active edges \(A_i \subseteq E_i(F, G, <)\)  to the term \(x^{k(\esg{G}{A})} (-1)^{\abs{A}}\): Each internally active edge \(e \in E_i(F, G, <)\) contributes (independently) the factor \(-x\) if \(e \in A_i\) (the number of connected components is increased by \(1\)), and the factor \(1\) otherwise:
\begin{align*}
\chi(G, x)
&= \sum_{\substack{F = (V, A_f) \in \mathcal{F}(G) \\ e(F, G, <) = 0}}{\sum_{A_i \subseteq E_i(F, G, <)}{x^{k(\esg{G}{A_f \setminus A_i})} (-1)^{\abs{A_f \setminus A_i}}}} \\
&= \sum_{\substack{F = (V, A_f) \in \mathcal{F}(G) \\ e(F, G, <) = 0}}{\sum_{A_i \subseteq E_i(F, G, <)}{x^{k(\esg{G}{A_f})} x^{\abs{A_i}} (-1)^{\abs{A_f}} (-1)^{\abs{A_i}}}} \\
&= \sum_{\substack{F = \in \mathcal{F}(G) \\ e(F, G, <) = 0}}{x^{k(G)} (-1)^{\abs{V} - k(G)} \sum_{A_i \subseteq E_i(F, G, <)}{(-x)^{\abs{A_i}}}} \\
&= (-1)^{\abs{V}} (-x)^{k(G)} \sum_{\substack{F \in \mathcal{F}(G) \\ e(F, G, <) = 0}}{(1 - x)^{i(F, G, <)}}. \qedhere
\end{align*}
\end{proof}

The proof above also ``includes'' the Broken-cycle Theorem \cite[Section 7]{whitney1932}, \cite[Theorem 2.3.1]{dong2005}: The edge subsets not including any broken cycle are exactly the edge subsets resulting from spanning forests having no externally active edges by deleting a subset of internally active edges. Hence the Broken-cycle Theorem can be stated as
\begin{align}
\chi(G, x)
&= \sum_{\substack{F = (V, A_f) \in \mathcal{F}(G) \\ e(F, G, <) = 0}}{\sum_{\substack{A' = A_f \setminus A_i \\ A_i \subseteq E_i(F, G, <)}}{x^{k(\esg{G}{A'})} (-1)^{\abs{A'}}}} \\
&= \sum_{\substack{F = (V, A_f) \in \mathcal{F}(G) \\ e(F, G, <) = 0}}{\sum_{\substack{A' = A_f \setminus A_i \\ A_i \subseteq E_i(F, G, <)}}{x^{\abs{V} - \abs{A'}} (-1)^{\abs{A'}}}}.
\end{align}

The connection between the spanning forest representation and the Broken-cycle Theorem is also given in \cite{bari1979}.

\subsection{Spanning tree representation of the reliability polynomial}

The set of connected spanning subgraphs of a connected graph can be enumerated from the spanning trees by only adding externally active edges. We apply this insight to obtain a spanning tree representation of the reliability polynomial. For a statement \(S\), let \([S]\) equal \(1\), if \(S\) is true, and \(0\) otherwise \cite{knuth1992}.

\begin{lemm}[Section 5, Item (19) in \cite{welsh2000}]
\label{lemm:application_S}
Let \(G = (V, E)\) be a graph with a linear order \(<\) on the edge set \(E\). The generating function (in the indeterminant \(y\)) for the number of connected spanning subgraphs \(S(G, y)\) has the spanning tree representation
\begin{align}
S(G, y) 
&= \sum_{A \subseteq E}{[k(\esg{G}{A}) = 1] y^{\abs{A}}} \\
&= y^{\abs{V} - 1} \sum_{T \in \mathcal{T}(G)}{(1 + y)^{e(T, G, <)}}.
\end{align}
\end{lemm}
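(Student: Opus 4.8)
The plan is to follow the same template as the chromatic polynomial proof: start from the edge-subset representation, apply Corollary \ref{coro:main_theorem} to regroup the sum over spanning forests (here, spanning trees, since we work with a connected graph), and then exploit Lemma \ref{lemm:independence} to factor the contributions of internally and externally active edges independently. First I would note that the first equality is essentially a definition: $[k(\esg{G}{A}) = 1]$ selects exactly the connected spanning subgraphs, and the generating function counts them by number of edges, so $S(G,y) = \sum_{A \subseteq E}{[k(\esg{G}{A}) = 1] y^{\abs{A}}}$ requires no work. The substance is the second equality.

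For the second equality, I would apply Corollary \ref{coro:main_theorem} with $f(G, A) = [k(\esg{G}{A}) = 1] y^{\abs{A}}$ to rewrite the edge-subset sum as a double sum over spanning trees $T = (V, A_f)$ and over choices $A_i \subseteq E_i(T, G, <)$, $A_e \subseteq E_e(T, G, <)$, with $A = (A_f \setminus A_i) \cup A_e$. The key structural observation, supplied by Lemma \ref{lemm:independence}, is that deleting any internally active edge $e$ increases the number of components ($k(F_{-e}) = k(F)-1$ drops the connectivity for a tree, i.e.\ $k$ rises above $1$), while adding an externally active edge $f$ never changes the number of components. Since $G$ is connected, $k(\esg{G}{A_f}) = 1$ for every spanning tree, so $k(\esg{G}{A}) = 1$ holds if and only if $A_i = \emptyset$ (no internally active edge is deleted), independently of the choice of $A_e$. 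This collapses the indicator: the sum over $A_i$ reduces to the single term $A_i = \emptyset$, leaving $A = A_f \cup A_e$ with $\abs{A} = \abs{A_f} + \abs{A_e} = (\abs{V}-1) + \abs{A_e}$.

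I would then carry out the remaining computation, which is now routine:
\begin{align*}
S(G, y)
&= \sum_{T = (V, A_f) \in \mathcal{T}(G)}{\sum_{A_e \subseteq E_e(T, G, <)}{y^{\abs{A_f} + \abs{A_e}}}} \\
&= y^{\abs{V} - 1} \sum_{T \in \mathcal{T}(G)}{\sum_{A_e \subseteq E_e(T, G, <)}{y^{\abs{A_e}}}} \\
&= y^{\abs{V} - 1} \sum_{T \in \mathcal{T}(G)}{(1 + y)^{e(T, G, <)}},
\end{align*}
where the last step is the binomial expansion $\sum_{A_e \subseteq E_e}{y^{\abs{A_e}}} = (1+y)^{\abs{E_e}}$ applied independently to each externally active edge.

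I expect the main obstacle to be the careful justification that $k(\esg{G}{A}) = 1$ forces $A_i = \emptyset$ and is otherwise insensitive to $A_e$. This is exactly the ``independence'' content of Lemma \ref{lemm:independence}: one must argue that adding externally active edges cannot restore connectivity lost by deleting an internally active edge (so $A_i \neq \emptyset$ always leaves the subgraph disconnected), and that the externally active edges are genuinely free, each contributing a factor $(1+y)$ regardless of which others are chosen. Everything else is bookkeeping parallel to the chromatic polynomial proof.
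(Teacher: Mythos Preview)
Your proposal is correct and follows essentially the same route as the paper: apply Corollary~\ref{coro:main_theorem} to the edge-subset sum, observe (via Lemma~\ref{lemm:independence}) that the indicator $[k(\esg{G}{A})=1]$ forces $G$ connected and $A_i=\emptyset$ while leaving $A_e$ free, and then collapse the remaining sum with the binomial identity. Your write-up is in fact slightly more explicit than the paper's in invoking Lemma~\ref{lemm:independence} to justify why externally active edges cannot restore connectivity lost by deleting an internally active one.
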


\begin{proof}
We start by applying Corollary \ref{coro:main_theorem}:
\begin{align*}
S(G, y) 
&= \sum_{A \subseteq E}{[k(\esg{G}{A}) = 1] y^{\abs{A}}} \\
&= \sum_{F = (V, A_f) \in \mathcal{F}(G)}{\sum_{\substack{A = (A_f \setminus A_i) \cup A_e \\ A_i \subseteq E_i(F, G, <) \\ A_e \subseteq E_e(F, G, <)}}{[k(\esg{G}{A}) = 1] y^{\abs{A}}}}.
\end{align*}

The spanning subgraph \(\esg{G}{A}\) is connected if and only if the graph \(G\) is connected, that means the spanning forests are spanning trees with \(\abs{V} - 1\) edges, and if no (internally active) edge is deleted from the spanning tree. It follows:
\begin{align*}
S(G, y) 
&= \sum_{T = (V, A_t) \in \mathcal{T}(G)}{\sum_{\substack{A = A_t \cup A_e \\ A_e \subseteq E_e(T, G, <)}}{y^{\abs{A}}}} 
\\
&= \sum_{T = (V, A_t) \in \mathcal{T}(G)}{\sum_{A_e \subseteq E_e(T, G, <)}{y^{\abs{A_t}} y^{\abs{A_e}}}} 
\\
&= \sum_{T = (V, A_t) \in \mathcal{T}(G)}{y^{\abs{A_t}} \sum_{A_e \subseteq E_e(T, G, <)}{y^{\abs{A_e}}}} 
\\
&= y^{\abs{V} - 1} \sum_{T \in \mathcal{T}(G)}{(1+y)^{e(T, G, <)}}. \qedhere
\end{align*}
\end{proof}

The probability, that all vertices of a graph are connected, if all edges of the graph are independently available with a probability \(p\), is a polynomial in \(p\), the reliability polynomial \(R(G, p)\) \cite{ellis2011, welsh2000}.

\begin{defi}
Let \(G = (V, E)\) be a graph. The reliability polynomial is defined as
\begin{align}
R(G, p) 
&= \sum_{A \subseteq E}{[k(\esg{G}{A}) = 1] \ p^{\abs{A}} (1-p)^{\abs{E \setminus A}}}.
\end{align}
\end{defi}

\begin{theo}[Section 5, Item (15) in \cite{welsh2000}]
Let \(G = (V, E)\) be a graph with a linear order \(<\) on the edge set \(E\). The reliability polynomial \(R(G, p)\) has the spanning tree representation
\begin{align}
R(G, p) 
&= (1-p)^{\abs{E}-\abs{V}+1} p^{\abs{V}-1} \sum_{T \in \mathcal{T}(G)}{\frac{1}{(1-p)^{e(T, G, <)}}}.
\end{align}
\end{theo}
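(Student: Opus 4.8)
The plan is to obtain the reliability polynomial from the connected-spanning-subgraph generating function \(S(G, y)\) of Lemma~\ref{lemm:application_S} by a single substitution, since the two differ only by a reweighting of each edge subset by a power of \(1-p\). First I would start from the definition of \(R(G, p)\), use \(\abs{E \setminus A} = \abs{E} - \abs{A}\), and pull the factor \((1-p)^{\abs{E}}\) out of every summand:
\[
R(G, p)
= (1-p)^{\abs{E}} \sum_{A \subseteq E}{[k(\esg{G}{A}) = 1]\left(\frac{p}{1-p}\right)^{\abs{A}}}
= (1-p)^{\abs{E}}\, S\!\left(G, \frac{p}{1-p}\right).
\]
This reduces the theorem entirely to a change of variables in the already-proved spanning tree representation of \(S(G, y)\).

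Next I would substitute \(y = \frac{p}{1-p}\) into the right-hand side of Lemma~\ref{lemm:application_S}. The key simplification is the identity \(1 + \frac{p}{1-p} = \frac{1}{1-p}\), which turns the factor \((1+y)^{e(T, G, <)}\) into \((1-p)^{-e(T, G, <)}\), while the prefactor \(y^{\abs{V}-1}\) becomes \(p^{\abs{V}-1}(1-p)^{-(\abs{V}-1)}\). Collecting all powers of \(1-p\), namely \(\abs{E} - (\abs{V}-1) = \abs{E} - \abs{V} + 1\) from the global prefactor, then yields
\[
R(G, p) = (1-p)^{\abs{E}-\abs{V}+1}\, p^{\abs{V}-1} \sum_{T \in \mathcal{T}(G)}{\frac{1}{(1-p)^{e(T, G, <)}}},
\]
which is exactly the claimed formula. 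The remaining work is routine exponent bookkeeping.

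The only subtlety, and hence the step I would treat most carefully, is that \(y = \frac{p}{1-p}\) is a rational substitution: the computation above first establishes the identity as one of rational functions in \(p\), valid wherever \(1-p \neq 0\). To conclude that it holds identically as polynomials, I would note that each summand \((1-p)^{\abs{E}-\abs{V}+1-e(T, G, <)}\, p^{\abs{V}-1}\) is in fact a genuine polynomial, because the externally active edges of a spanning tree \(T\) lie among the \(\abs{E} - (\abs{V}-1) = \abs{E}-\abs{V}+1\) non-tree edges, so \(e(T, G, <) \le \abs{E}-\abs{V}+1\) and no negative power of \(1-p\) survives. I do not expect a genuine obstacle here: the combinatorial heart of the argument was already carried by Lemma~\ref{lemm:application_S} (and ultimately by Theorem~\ref{theo:main_theorem}), so this theorem is essentially a verification that the substitution \(y \mapsto \frac{p}{1-p}\) transports one representation into the other.
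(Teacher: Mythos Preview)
Your proposal is correct and follows exactly the paper's route: rewrite \(R(G,p)\) as \((1-p)^{\abs{E}}\,S\!\left(G,\tfrac{p}{1-p}\right)\) and then invoke Lemma~\ref{lemm:application_S}. You spell out the exponent bookkeeping and the polynomial-vs-rational-function point more explicitly than the paper (which simply says the statement follows directly from the lemma), but the argument is the same.
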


\begin{proof}
We rewrite the definition of the reliability polynomial using \(S(G, y)\):
\begin{align*}
R(G, p)
&= \sum_{A \subseteq E}{[k(\esg{G}{A}) = 1] p^{\abs{A}} (1-p)^{\abs{E \setminus A}}} \\
&= \sum_{A \subseteq E}{[k(\esg{G}{A}) = 1] \left( \frac{p}{1-p} \right)^{\abs{A}} (1-p)^{\abs{E}}} \\
&= (1-p)^{\abs{E}} S \left(G, \frac{p}{1-p} \right).
\end{align*}
From this the statement follows directly by Lemma \ref{lemm:application_S}.
\end{proof}

\subsection{Spanning forest represenation of a derivation of the \texorpdfstring{\\}{} weighted graph polynomial}

For the graph polynomials above it was possible to derive a spanning forest/tree representation that depends only on the number of internally and externally active edges, independently of the corresponding edge sets. Obviously, this is not possible for every graph polynomials, also not for those having an edge subset representation.

The graph polynomial \(U'(G, \bar{x}, y)\), a derivation of the weighted graph polynomial \(U(G, \bar{x}, y)\) \cite{noble1999}, is an example where only the contribution of the externally active edges can be summed up.

\begin{defi}
Let \(G = (V, E)\) be a graph and \(\bar{x} = (x_1, \ldots, x_{\abs{V}})\). The graph polynomial \(U'(G, \bar{x}, y)\) is defined as
\begin{align}
U'(G, \bar{x}, y)
&= \sum_{A \subseteq E}{\prod_{i = 1}^{\abs{V}}{x_{i}^{k_i(\esg{G}{A})}} y^{\abs{A}}},
\end{align}
where \(k_i(G)\) denotes the number of connected components including exactly \(i\) vertices.
\end{defi}

\begin{theo}
Let \(G = (V, E)\) be a graph with a linear order \(<\) on the edge set \(E\) and \(\bar{x} = (x_1, \ldots, x_{\abs{V}})\). The (derivation of the) weighted graph polynomial \(U'(G, \bar{x}, y)\) has the spanning forest representation
\begin{align}
U'(G, \bar{x}, y)
&= \sum_{F = (V, A_f) \in \mathcal{F}(G)}{\sum_{\substack{A = A_f \setminus A_i \\ A_i \subseteq E_i(F, G, <)}}{\prod_{i = 1}^{\abs{V}}{x_{i}^{k_i(\esg{G}{A})}} y^{\abs{A}} (1+y)^{e(F, G, <)}}},	
\end{align}
where \(k_i(G)\) denotes the number of connected components including exactly \(i\) vertices.
\end{theo}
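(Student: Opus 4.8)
The plan is to mirror the reliability and chromatic proofs, the only new feature being that here merely the externally active edges can be summed out. I would begin from the edge subset representation in the definition of \(U'\) and apply Corollary~\ref{coro:main_theorem} to reindex the sum over \(A \subseteq E\) as a sum over spanning forests together with the choices of deleted internally active and added externally active edges:
\begin{align*}
U'(G, \bar{x}, y)
&= \sum_{A \subseteq E}{\prod_{i = 1}^{\abs{V}}{x_{i}^{k_i(\esg{G}{A})}} y^{\abs{A}}} \\
&= \sum_{F = (V, A_f) \in \mathcal{F}(G)}{\sum_{\substack{A = (A_f \setminus A_i) \cup A_e \\ A_i \subseteq E_i(F, G, <) \\ A_e \subseteq E_e(F, G, <)}}{\prod_{i = 1}^{\abs{V}}{x_{i}^{k_i(\esg{G}{A})}} y^{\abs{A}}}}.
\end{align*}

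The decisive observation, which I would establish before computing, is that the whole component-size vector \((k_1(\esg{G}{A}), \ldots, k_{\abs{V}}(\esg{G}{A}))\) is independent of the set \(A_e\) of added externally active edges. Writing \(A' = A_f \setminus A_i\) and \(A = A' \cup A_e\) (a disjoint union of forest and non-forest edges), this amounts to strengthening Lemma~\ref{lemm:independence} from single edges to subsets: for each externally active edge \(f\), its fundamental cycle in \(F\) consists of \(f\) and a tree-path \(P_f \subseteq A_f\), and since \(f\) is the maximal edge of that cycle, every \(e \in P_f\) satisfies \(e < f\) while \(f\) crosses the fundamental cut of \(e\); hence no such \(e\) can be internally active. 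Therefore \(A_i \cap P_f = \emptyset\), the path \(P_f\) survives the deletion of \(A_i\), and adding any \(f \in A_e\) merely recloses a cycle without merging components. Consequently \(k_i(\esg{G}{A' \cup A_e}) = k_i(\esg{G}{A'})\) for every \(i\) and every \(A_e\), while \(\abs{A' \cup A_e} = \abs{A'} + \abs{A_e}\).

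With this independence in hand, the externally active contribution factors off and is summed by the binomial theorem, exactly as in Lemma~\ref{lemm:application_S}:
\begin{align*}
U'(G, \bar{x}, y)
&= \sum_{F = (V, A_f) \in \mathcal{F}(G)}{\sum_{\substack{A = A_f \setminus A_i \\ A_i \subseteq E_i(F, G, <)}}{\prod_{i = 1}^{\abs{V}}{x_{i}^{k_i(\esg{G}{A})}} y^{\abs{A}} \sum_{A_e \subseteq E_e(F, G, <)}{y^{\abs{A_e}}}}} \\
&= \sum_{F = (V, A_f) \in \mathcal{F}(G)}{\sum_{\substack{A = A_f \setminus A_i \\ A_i \subseteq E_i(F, G, <)}}{\prod_{i = 1}^{\abs{V}}{x_{i}^{k_i(\esg{G}{A})}} y^{\abs{A}} (1 + y)^{e(F, G, <)}}},
\end{align*}
since each of the \(e(F, G, <)\) externally active edges contributes the factor \(y\) when included in \(A_e\) and the factor \(1\) otherwise, which reproduces the claimed spanning forest representation.

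The hard part is conceptual rather than computational: unlike in the chromatic and reliability cases, the internally active edges cannot be summed out, because deleting an internally active edge genuinely splits a component and thereby alters the vector \((k_1, \ldots, k_{\abs{V}})\), so the product \(\prod_{i} x_i^{k_i(\esg{G}{A})}\) does not depend on \(A_i\) in a uniform, factorable way. The inner sum over \(A_i \subseteq E_i(F, G, <)\) must therefore be retained, which is precisely the asymmetry the theorem records; the only genuine content beyond this bookkeeping is the subset version of Lemma~\ref{lemm:independence} established above.
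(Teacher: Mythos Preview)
Your proof is correct and follows the same route as the paper: apply Corollary~\ref{coro:main_theorem} to pass from the edge-subset sum to the spanning-forest double sum, then factor out and sum the externally active contribution via the binomial theorem. The paper's own proof is terser (it simply invokes ``as in the proofs above'' for the summing-out step), whereas you supply the explicit subset version of Lemma~\ref{lemm:independence}---that no edge on the fundamental cycle of an externally active edge can be internally active---which the paper leaves implicit.
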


\begin{proof}
We start by applying Corollary \ref{coro:main_theorem} and then sum up the contribution of the externally active edges (as in the proofs above):
\begin{align*}
U'(G, \bar{x}, y) 
&= \sum_{A \subseteq E}{\prod_{i = 1}^{\abs{V}}{x_{i}^{k_i(\esg{G}{A})}} y^{\abs{A}}} \\
&= \sum_{F = (V, A_f) \in \mathcal{F}(G)}{\sum_{\substack{A = (A_f \setminus A_i) \cup A_e \\ A_i \subseteq E_i(F, G, <) \\ A_e \subseteq E_e(F, G, <)}} {\prod_{i = 1}^{\abs{V}}{x_{i}^{k_i(\esg{G}{A})}} y^{\abs{A}}}}  \\
&= \sum_{F = (V, A_f) \in \mathcal{F}(G)}{\sum_{\substack{A = A_f \setminus A_i \\ A_i \subseteq E_i(F, G, <)}}{\prod_{i = 1}^{\abs{V}}{x_{i}^{k_i(\esg{G}{A})}} y^{\abs{A}} (1+y)^{e(F, G, <)}}}. \qedhere
\end{align*}
\end{proof}

\end{document}